\def\BibTeX{{\rm B\kern-.05em{\sc i\kern-.025em b}\kern-.08em
    T\kern-.1667em\lower.7ex\hbox{E}\kern-.125emX}}
\newcommand{\EE}{\mathbb{E}}
\newcommand{\E}{\mathrm{E}}
\newcommand{\PP}{\mathrm{P}}
    \newcommand{\dto}{\xrightarrow{d}}
    \newcommand{\wto}{\xrightarrow{w}}
    \newcommand{\vto}{\xrightarrow{v}}
    \newcommand{\fidi}{\xrightarrow{\text{fidi}}}
\newcommand{\eqd}{\stackrel{d}{=}}
 \newcommand{\floor}[1]{\lfloor#1\rfloor}
\theoremstyle{plain}
\newtheorem{thm}{Theorem}[section]
\newtheorem{lem}[thm]{Lemma}
\theoremstyle{definition}
\newtheorem{rem}[thm]{Remark}
\newtheorem{ex}[thm]{Example}
\newtheorem{cond}[thm]{Condition}
\numberwithin{equation}{section}
\begin{document}

\title[Weak convergence of multivariate partial maxima processes] 
{Weak convergence of multivariate partial maxima processes}

%
\author{Danijel Krizmani\'{c}}

\address{Danijel Krizmani\'{c}\\ Department of Mathematics\\
        University of Rijeka\\
        Radmile Matej\v{c}i\'{c} 2, 51000 Rijeka\\
        Croatia}
\email{dkrizmanic@math.uniri.hr}


\subjclass[2010]{Primary 60F17; Secondary 60G52, 60G70}
\keywords{functional limit theorem, regular variation, weak $M_{1}$ topology, extremal process, weak convergence, multivariate GARCH}


\begin{abstract}
For a strictly stationary sequence of $\mathbb{R}_{+}^{d}$--valued random vectors we derive functional convergence of partial maxima stochastic processes under joint regular variation and weak dependence conditions. The limit process is an extremal process and the convergence takes place in the space of $\mathbb{R}_{+}^{d}$--valued c\`{a}dl\`{a}g functions on
$[0,1]$, with the Skorohod weak $M_{1}$ topology. We also show that this topology in general can not be replaced by the stronger (standard) $M_{1}$ topology. The theory is illustrated on three examples, including the multivariate squared GARCH process with constant conditional correlations.
\end{abstract}

\maketitle

\section{Introduction}

A classical question in extreme value theory is under what assumptions the scaled maximum
$$\bigvee_{i=1}^{n} \frac{X_{i}-b_{n}}{a_{n}}$$
of i.i.d.~random variables $(X_{i})_{i \in \mathbb{N}}$ converges weakly, for some $a_{n}>0$ and $b_{n} \in \mathbb{R}$. Also what are the possible limit distributions? Answers to these questions were given by Fisher and Tippet~\cite{FT28}, Gnedenko~\cite{Gn43} and de Haan~\cite{Ha70}. Introducing a time variable, Lamperti~\cite{La64} studied the asymptotical distributional behavior of partial maxima stochastic processes
$$ \bigvee_{i=1}^{\lfloor nt \rfloor} \frac{X_{i}-b_{n}}{a_{n}}, \quad t \geq 0.$$
Extension of the theory to dependent random variables, and then to multivariate and spatial settings were particularly stimulating and useful in applications, we refer here only to Adler~\cite{Ad78}, Leadbetter~\cite{Le74},~\cite{Le76}, Beirlant et al.~\cite{BGST04}, de Haan and Ferreira~\cite{HaFe06} and Resnick~\cite{Re07}.

In this paper we focus on the multivariate case in the weakly dependent setting. Let $\mathbb{R}_{+}^{d} = [0, \infty)^{d}$.
We consider a stationary sequence of $\mathbb{R}^{d}_{+}$--valued random vectors $(X_{n})$. In the i.i.d.~case it is well known that weak convergence of the scaled maximum is equivalent to the regular variation of the distribution of $X_{1}$, i.e.
$$ M_{n} = \bigvee_{i=1}^{n}\frac{X_{i}}{a_{n}} \dto Y_{0}$$
if and only if
\begin{equation}\label{e:regvar}
 n \PP \Big( \frac{X_{1}}{a_{n}} \in \cdot\,\Big) \vto \mu(\,\cdot\,),
\end{equation}
where $Y_{0}$ is a random vector with distribution function $F_{0}(x) = e^{-\mu([[0,x]]^{c})}, x \in \mathbb{R}_{+}^{d}$, $\mu$ is a Radon measure and $(a_{n})$ a sequence of positive real numbers such that
$$ n \PP (\|X_{1}\| > a_{n}) \to 1 \qquad \textrm{as} \ n \to \infty,$$
see Proposition 7.1 in Resnick~\cite{Re07}. The arrow $" \vto"$ above denotes vague convergence of measures, and $[[a,b]]$ the product segment, i.e.
$$[[a,b]]=[a^{1},b^{1}] \times [a^{2},b^{2}]
\times \dots \times [a^{d},b^{d}]$$
for $a=(a^{1}, \ldots, a^{d}), b=(b^{1}, \ldots, b^{d}) \in
\mathbb{R}_{+}^{d}$.

In the i.i.d.~case relation (\ref{e:regvar}) is also equivalent to the functional convergence of stochastic processes of partial maxima of $(X_{n})$, i.e.
 \begin{equation}\label{e:functconv}
   M_{n}(\,\cdot\,) = \bigvee_{i=1}^{\lfloor n \cdot
   \rfloor}\frac{X_{i}}{a_{n}} \dto Y_{0}(\,\cdot\,)
 \end{equation}
in $D([0,1], \mathbb{R}_{+}^{d})$, the space of $\mathbb{R}_{+}^{d}$--valued c\`{a}dl\`{a}g functions on
$[0,1]$, with the Skorohod $J_{1}$ topology, with the limit $Y_{0}(\,\cdot\,)$ being an extremal process, see Proposition 7.2 in Resnick~\cite{Re07}.

In this paper we are interested in the investigation of the asymptotic distributional behavior of the processes $M_{n}(\,\cdot\,)$ for a sequence of weakly dependent $\mathbb{R}_{+}^{d}$--valued random vectors that are jointly regularly varying. Since we study extremes of random processes, nonnegativity of the components of random vectors
$X_{n}$ in reality is not a restrictive assumption.

First, we introduce the essential ingredients about regular variation, weak dependence and Skorohod topologies in Section~\ref{s:one}. In Section~\ref{s:two} we prove the so
called timeless result on weak convergence of scaled extremes
$M_{n}$, based on a point process convergence obtained by Davis and Mikosch~\cite{DaMi98}. Using this result and a multivariate version of the limit theorem derived by Basrak et al.~\cite{BKS} for a certain time-space point processes, in Section~\ref{s:three} we prove a functional limit theorem for
processes of partial maxima $M_{n}(\,\cdot\,)$ in the space $D([0,1], \mathbb{R}_{+}^{d})$
endowed with the Skorohod weak $M_{1}$ topology. This topology is weaker than the standard $M_{1}$ topology (when $d >1$). The used methods are partly based on the work of Basrak and Krizmani\'{c}~\cite{BaKr} for partial sums.
Finally, in Section~\ref{s:four} the theory is applied to $m$--dependent processes, stochastic recurrence equations and multivariate squared GARCH (p,q) with constant conditional correlations. We also illustrate by an example that
the weak $M_{1}$ convergence in our main theorem, in general, can not be replaced by the standard $M_{1}$ convergence.

\section{Preliminaries}\label{s:one}

In this section we introduce some basic notions and results on regular variation and point processes that will be used in the following sections.

\subsection{Regular variation}

Regular variation on $\mathbb{R}_{+}^{d} $ for random vectors is typically formulated in terms of vague convergence on $\EE^{d}= [0, \infty]^{d} \setminus \{ \textbf{0} \}$. The topology on $\EE^{d}$ is chosen so that a set $B \subseteq \EE^{d}$
has compact closure if and only if it is bounded away from zero,
that is, if there exists $u > 0$ such that $B \subseteq \EE^{d}_u = \{ x
\in \EE^{d} : \| x \| >u \}$. Here $\| \cdot \|$ denotes the max-norm on $\mathbb{R}_{+}^{d}$, i.e.\
$\displaystyle \| x \|=\max \{ x^{i} : i=1, \ldots , d\}$ where
$x=(x^{1}, \ldots, x^{d}) \in \mathbb{R}_{+}^{d}$. Denote by $C_{K}^{+}(\mathbb{E}^{d})$ the class of all $\mathbb{R}_{+}$--valued continuous functions on $\mathbb{E}^{d}$ with compact support.

The vector $\xi$ with values in $\mathbb{R}_{+}^{d}$ is (multivariate) regularly varying with index $\alpha >0$ if there exists a random vector $\Theta$
on the unit sphere $\mathbb{S}_{+}^{d-1} = \{ x \in \mathbb{R}_{+}^{d} :
\| x \|=1 \}$ in $\mathbb{R}_{+}^{d}$, such that for every $u \in (0,\infty)$
 \begin{equation}\label{e:regvar1}
   \frac{\PP(\|\xi\| > ux,\,\xi / \| \xi \| \in \cdot \, )}{\PP(\| \xi \| >x)}
    \wto u^{-\alpha} \PP( \Theta \in \cdot \,)
 \end{equation}
as $x \to \infty$, where the arrow "$\wto$" denotes weak convergence of finite measures.
 Regular variation can be expressed in terms of vague convergence of measures on $\mathcal{B}(\EE^{d})$:
$$ n \PP ( a_{n}^{-1} \xi \in \cdot\,) \vto \mu (\,\cdot\,),$$
where $(a_{n})$ is a sequence of positive real numbers tending to infinity and $\mu$ is a non-null Radon measure on $\mathcal{B}(\EE^{d})$.

We say that a strictly stationary $\mathbb{R}^{d}_{+}$--valued process $(\xi_{n})_{n \in \mathbb{Z}}$ is \emph{jointly regularly varying} with index
$\alpha >0$ if for any nonnegative integer $k$ the
$kd$-dimensional random vector $\xi = (\xi_{1}, \ldots ,
\xi_{k})$ is multivariate regularly varying with index $\alpha$.

Theorem~2.1 in Basrak and Segers~\cite{BaSe} provides a convenient
characterization of joint regular variation: it is necessary and
sufficient that there exists a process $(Y_n)_{n \in \mathbb{Z}}$
with $\PP(\|Y_0\| > y) = y^{-\alpha}$ for $y \geq 1$ such that as $x
\to \infty$,
\begin{equation}\label{e:tailprocess}
  \bigl( (x^{-1}\ \xi_n)_{n \in \mathbb{Z}} \, \big| \, \| \xi_0\| > x \bigr)
  \fidi (Y_n)_{n \in \mathbb{Z}},
\end{equation}
where "$\fidi$" denotes convergence of finite-dimensional
distributions. The process $(Y_{n})_{n \in \mathbb{Z}}$ is called
the \emph{tail process} of $(\xi_{n})_{n \in \mathbb{Z}}$.

\subsection{Point processes and dependence conditions}

Let $(X_{n})$ be a strictly stationary sequence of $\mathbb{R}^{d}_{+}$--valued random vectors and assume it is jointly regularly varying with index $\alpha >0$. Let $(Y_{n})$ be
the tail process of $(X_{n})$. In order to obtain weak convergence of the scaled extremes $M_{n}$ and the partial maxima processes $M_{n}(\,\cdot\,)$ we will use limit results for the corresponding point processes of jumps and then by the continuous mapping theorem transfer this convergence results to extremes and maxima processes. In order to establish these point process convergence we introduce the following processes
\begin{equation*}\label{E:ppspacetime}
 N_{n} = \sum_{i=1}^{n}\delta_{X_{i}/a_{n}}, \qquad N_{n}^{*} = \sum_{i=1}^{n} \delta_{(i / n,\,X_{i} / a_{n})} \qquad \textrm{for all} \ n\in \mathbb{N},
\end{equation*}
where $(a_{n})$ is a sequence of positive real numbers such that
\begin{equation}\label{e:niz}
 n \PP( \| X_{1}\| > a_{n}) \to 1,
\end{equation}
as $n \to \infty$. The point process convergence for the sequence $(N_{n})$ was obtained by Davis and Mikosch~\cite{DaMi98}, while the convergence for the sequence $(N_{n}^{*})$ in the univariate case was established by Basrak et al.~\cite{BKS}, but with straightforward adjustments it carries over to the multivariate case, see Theorem~\ref{t:convpp} below. The appropriate weak dependence conditions for this convergence results are given below. With them we will be able to control the dependence in the sequence $(X_{n})$.

\begin{cond}\label{c:mixcond1}
There exists a sequence of positive integers $(r_{n})$ such that $r_{n} \to \infty $ and $r_{n} / n \to 0$ as $n \to \infty$ and such that for every $f \in C_{K}^{+}([0,1] \times \mathbb{E}^{d})$, denoting $k_{n} = \lfloor n / r_{n} \rfloor$, as $n \to \infty$,
\begin{equation}\label{e:mixcon}
 \E \biggl[ \exp \biggl\{ - \sum_{i=1}^{n} f \biggl(\frac{i}{n}, \frac{X_{i}}{a_{n}}
 \biggr) \biggr\} \biggr]
 - \prod_{k=1}^{k_{n}} \E \biggl[ \exp \biggl\{ - \sum_{i=1}^{r_{n}} f \biggl(\frac{kr_{n}}{n}, \frac{X_{i}}{a_{n}} \biggr) \biggr\} \biggr] \to 0.
\end{equation}
\end{cond}
It can be shown that Condition~\ref{c:mixcond1} is implied by the strong mixing property (cf. Krizmani\'{c}~\cite{Kr16}). Condition~\ref{c:mixcond1} is slightly stronger than the condition $\mathcal{A}(a_{n})$ introduced by Davis and Mikosch~\cite{DaMi98}.

\begin{cond}\label{c:mixcond2}
There exists a sequence of positive integers $(r_{n})$ such that $r_{n} \to \infty $ and $r_{n} / n \to 0$ as $n \to \infty$ and such that for every $u > 0$,
\begin{equation}
\label{e:anticluster}
  \lim_{m \to \infty} \limsup_{n \to \infty}
  \PP \biggl( \max_{m \leq |i| \leq r_{n}} \| X_{i} \| > ua_{n}\,\bigg|\,\| X_{0}\|>ua_{n} \biggr) = 0.
\end{equation}
\end{cond}

By Proposition~4.2 in Basrak and Segers~\cite{BaSe},
under Condition~\ref{c:mixcond2} the following
holds
\begin{eqnarray}\label{E:theta:spectral}
   \theta = \PP ({\textstyle\sup_{i\ge 1}} \| Y_{i}\| \le 1) = \PP ({\textstyle\sup_{i\le -1}} \| Y_{i}\| \le 1)>0,
\end{eqnarray}
and $\theta$ is the extremal index of the univariate sequence $(\| X_{n} \|)$.
Recall that a strictly stationary sequence of nonnegative random variables $(\xi_{n})$ has extremal index $\theta$ if for every $\tau >0$ there exists a sequence of real numbers $(u_{n})$ such that
\begin{equation}\label{e:eindex}
 \lim_{n \to \infty} n \PP( \xi_{1} > u_{n}) \to \tau \qquad \textrm{and} \qquad \lim_{n \to \infty} \PP \bigg( \max_{1 \leq i \leq n} \xi_{i} \leq u_{n} \bigg) \to e^{-\theta \tau}.
\end{equation}
It holds that $\theta \in [0,1]$. In particular, if the $\xi_{n}$ are i.i.d. then (\ref{e:eindex}) can hold only for $\theta =1$.
For a detailed discussion on joint regular variation and dependence Conditions~\ref{c:mixcond1} and \ref{c:mixcond2} we refer to Basrak et al.~\cite{BKS}, Section 3.4.

Under joint regular variation and Conditions~\ref{c:mixcond1} and \ref{c:mixcond2}, by Theorem 2.8 in Davis and Mikosch~\cite{DaMi98} we obtain the convergence in distribution of point processes $N_{n}$ to some $N$, which by Theorem 2.2 and Corollary 2.4 in~\cite{DaMi98} has the following cluster representation
\begin{equation}\label{eq:Nnconv}
 N \eqd \sum_{i} \sum_{j} \delta_{P_{i}Q_{ij}},
 \end{equation}
where $\sum_{i=1}^{\infty}\delta_{P_{i}}$ is a Poisson process on $\mathbb{R}_{+}$
with intensity measure $\kappa$ given by $\kappa(dy) = \theta \alpha
y^{-\alpha-1}1_{(0,\infty)}(y)\,dy$, and $\sum_{j= 1}^{\infty}\delta_{Q_{ij}}$, $i \geq 1$,
are i.i.d. point processes whose points satisfy $\sup_{j}\|Q_{ij}\|=1$, and all point processes are mutually
independent. For a more precise description of the distribution of point process $\sum_{j= 1}^{\infty}\delta_{Q_{ij}}$ see~\cite{DaMi98}.

Then by the same arguments as in the proof of Theorem 2.3 in~\cite{BKS} one obtains the following result (cf. also Basrak and Krizmani\'{c}~\cite{BaKr}).

\begin{thm}\label{t:convpp}
\label{T:pointprocess:complete} Assume that Conditions~\ref{c:mixcond1}
and \ref{c:mixcond2} hold
for the same sequence $(r_n)$. Then for every $u \in (0, \infty)$ and as
$n \to \infty$,
\begin{equation}\label{eq:convpp}
    N_n^{*} \bigg|_{[0, 1] \times \EE^{d}_u}\, \dto N^{(u)}
    = \sum_i \sum_j \delta_{(T^{(u)}_i, u Z_{ij})} \bigg|_{[0, 1] \times \EE^{d}_u}\,,
\end{equation}
in $[0, 1] \times \EE^{d}_u$ and
\begin{enumerate}
\item $\sum_i \delta_{T^{(u)}_i}$ is a homogeneous Poisson process on $[0, 1]$ with intensity $\theta u^{-\alpha}$,
\item $(\sum_j \delta_{Z_{ij}})_i$ is an i.i.d.~sequence of point processes in $\EE^{d}$, independent of $\sum_i \delta_{T^{(u)}_i}$, and with distribution equal to  $( \sum_{j \in \mathbb{Z}} \delta_{Y_j} \,|\, \sup_{i \le -1} \| Y_i\| \le 1).$
\end{enumerate}
\end{thm}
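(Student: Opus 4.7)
The plan is to prove the convergence via Laplace functionals, mirroring the argument for Theorem~2.3 in~\cite{BKS}. Fix a nonnegative test function $f \in C_{K}^{+}([0,1] \times \mathbb{E}^{d}_{u})$; then it suffices to show $\E[\exp(-N_{n}^{*}(f))] \to \E[\exp(-N^{(u)}(f))]$ as $n \to \infty$, since $f$ vanishes whenever the spatial coordinate has norm at most $u$, so effectively only points in $[0,1] \times \mathbb{E}^{d}_{u}$ contribute.

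First I would invoke Condition~\ref{c:mixcond1} directly: the left-hand side of~\eqref{e:mixcon} vanishes, so $\E[\exp(-N_{n}^{*}(f))]$ may be replaced asymptotically by the block product
\[
\Pi_{n}(f) = \prod_{k=1}^{k_{n}} \E\Big[\exp\Big(-\sum_{i=1}^{r_{n}} f\Big(\frac{kr_{n}}{n}, \frac{X_{i}}{a_{n}}\Big)\Big)\Big],
\]
with $k_{n} = \lfloor n/r_{n} \rfloor$. By stationarity and the fact that the support of $f$ is bounded away from the origin in the spatial variable, each factor equals $1 - \epsilon_{n,k}$ with $\epsilon_{n,k} = O(r_{n}/n)$, thanks to a union bound combined with the regular variation tail $\PP(\|X_{1}\| > u a_{n}) \sim u^{-\alpha}/n$. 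The standard exponential approximation then gives $\log \Pi_{n}(f) = -\sum_{k=1}^{k_{n}} \epsilon_{n,k} + o(1)$, reducing the task to identifying the limit of the sum $\sum_{k=1}^{k_{n}} \epsilon_{n,k}$.

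Next I would exploit the cluster representation of Davis and Mikosch~\cite{DaMi98}. Conditioning on $\max_{1 \le i \le r_{n}} \|X_{i}\| > u a_{n}$, the point process $\sum_{i=1}^{r_{n}} \delta_{X_{i}/a_{n}}$ restricted to $\mathbb{E}^{d}_{u}$ converges in distribution to $u \cdot \sum_{j} \delta_{Z_{j}}$, where $(\sum_{j} \delta_{Z_{j}})$ has the cluster law $(\sum_{j \in \mathbb{Z}} \delta_{Y_{j}} \,|\, \sup_{i \le -1} \|Y_{i}\| \le 1)$, while~\eqref{E:theta:spectral} together with Condition~\ref{c:mixcond2} pins down the asymptotic mass of the max-event as $\theta r_{n} u^{-\alpha}/n$. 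Consequently each $\epsilon_{n,k} \sim (\theta r_{n} u^{-\alpha}/n)\,\E\big[1 - \exp(-\sum_{j} f(kr_{n}/n, u Z_{j}))\big]$, and summing over $k$ a Riemann-sum argument yields
\[
\sum_{k=1}^{k_{n}} \epsilon_{n,k} \longrightarrow \theta u^{-\alpha}\int_{0}^{1} \E\Big[1 - \exp\Big(-\sum_{j} f(t, u Z_{j})\Big)\Big]\,dt,
\]
which is precisely $-\log \E[\exp(-N^{(u)}(f))]$ for the marked Poisson cluster process described in~\eqref{eq:convpp}, confirming the identification of the limit together with properties~(i) and~(ii).

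The main obstacle is establishing the within-block cluster convergence with enough uniformity in $k$ to pass to the Riemann-sum limit, and controlling the replacement of $\epsilon_{n,k}$ by its cluster-expectation asymptotics simultaneously across all blocks. Condition~\ref{c:mixcond2} is the decisive ingredient here: it prevents clusters from spreading across block boundaries of length $r_{n}$, so that the block contributions genuinely behave as i.i.d.\ cluster marks attached to a homogeneous Poisson process on $[0,1]$ with intensity $\theta u^{-\alpha}$. Once this is in hand, the remaining pieces---continuity of the Laplace functional on the space of point measures over the locally compact set $[0,1] \times \mathbb{E}^{d}_{u}$, the exponential approximation $\prod(1 - \epsilon_{n,k}) \approx \exp(-\sum \epsilon_{n,k})$, and Riemann-sum convergence---are routine.
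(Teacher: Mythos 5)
Your proposal is correct and follows essentially the same route as the paper, which gives no independent proof but defers to ``the same arguments as in the proof of Theorem 2.3 in \cite{BKS}'' --- precisely the Laplace-functional argument you outline: block factorization via Condition~\ref{c:mixcond1}, identification of the within-block cluster law via Condition~\ref{c:mixcond2} and the tail process (Basrak--Segers), the asymptotics $\PP(\max_{i\le r_n}\|X_i\|>ua_n)\sim \theta r_n u^{-\alpha}/n$, and a Riemann-sum passage to the Laplace functional of the marked Poisson process. No substantive gaps beyond the level of detail already suppressed by the paper itself.
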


\subsection{The weak $M_{1}$ topology}\label{ss:j1m1}

The stochastic processes that we consider have discontinuities, and therefore it is natural for the function space of sample paths of these stochastic processes to take the space $D([0,1], \mathbb{R}_{+}^{d})$ of all right-continuous $\mathbb{R}_{+}^{d}$--valued functions on $[0,1]$ with left limits.

In the one dimensional case (cf. Krizmani\'{c}~\cite{Kr14}) the partial maxima processes $M_{n}(\,\cdot\,)$ converge to an extremal process in the space $D([0,1], \mathbb{R}_{+})$ equipped with the Skorohod $M_{1}$ topology. In this paper we extend this result to the multivariate setting, but with the weak $M_{1}$ topology, since as we show later the direct generalization of the one-dimensional result to random vectors fails in the standard $M_{1}$ topology on $D([0,1], \mathbb{R}_{+}^{d})$ for $d \geq 2$. In the sequel we give the definition of the weak $M_{1}$ topology.

 For $x \in D([0,1],
\mathbb{R}_{+}^{d})$ the completed graph of $x$ is the set
\[
  G_{x}
  = \{ (t,z) \in [0,1] \times \mathbb{R}_{+}^{d} : z \in [[x(t-), x(t)]]\},
\]
where $x(t-)$ is the left limit of $x$ at $t$. We define an
order on the graph $G_{x}$ by saying that $(t_{1},z_{1}) \le
(t_{2},z_{2})$ if either (i) $t_{1} < t_{2}$ or (ii) $t_{1} = t_{2}$
and $|x^{j}(t_{1}-) - z^{j}_{1}| \le |x^{j}(t_{2}-) - z^{j}_{2}|$
for all $j=1,\ldots,d$. Note that the relation $\le$ induces only a partial
order on the graph $G_{x}$. A weak parametric representation
of the graph $G_{x}$ is a continuous nondecreasing function $(r,u)$
mapping $[0,1]$ into $G_{x}$, with $r \in C([0,1],[0,1])$ being the
time component and $u=(u^{1},\ldots, u^{d}) \in C([0,1],
\mathbb{R}_{+}^{d})$ being the spatial component, such that $r(0)=0,
r(1)=1$ and $u(1)=x(1)$. Let $\Pi_{w}(x)$ denote the set of weak
parametric representations of the graph $G_{x}$. For $x_{1},x_{2}
\in D([0,1], \mathbb{R}_{+}^{d})$ define
\[
  d_{w}(x_{1},x_{2})
  = \inf \{ \|r_{1}-r_{2}\|_{[0,1]} \vee \|u_{1}-u_{2}\|_{[0,1]} : (r_{i},u_{i}) \in \Pi_{w}(x_{i}), i=1,2 \},
\]
where $\|x\|_{[0,1]} = \sup \{ \|x(t)\| : t \in [0,1] \}$. Now we
say that $x_{n} \to x$ in $D([0,1], \mathbb{R}_{+}^{d})$ for a sequence
$(x_{n})$ in the weak Skorohod $M_{1}$ (or shortly $WM_{1}$)
topology if $d_{w}(x_{n},x)\to 0$ as $n \to \infty$. The $WM_{1}$
topology is weaker than the standard (or strong) $M_{1}$ topology on $D([0,1],
\mathbb{R}_{+}^{d})$. For $d=1$ the two topologies coincide. The $WM_{1}$ topology
coincides with the topology induced by the metric
\begin{equation}\label{e:defdp}
 d_{p}(x_{1},x_{2})=\max \{ d_{M_{1}}(x_{1}^{j},x_{2}^{j}) :
j=1,\ldots,d\}
\end{equation}
 for $x_{i}=(x_{i}^{1}, \ldots, x_{i}^{d}) \in D([0,1],
 \mathbb{R}_{+}^{d})$ and $i=1,2$ (here $d_{M_{1}}$ denotes the standard Skorohod
 $M_{1}$ metric on $D([0,1],\mathbb{R}_{+})$). The metric $d_{p}$ induces the product topology on $D([0,1], \mathbb{R}_{+}^{d})$.
For detailed discussion of the strong and weak $M_{1}$ topologies we refer to
Whitt~\cite{Whitt02}, sections 12.3--12.5. Recall here the definition of the metric $d_{M_{1}}$. For $x \in D([0,1], \mathbb{R}_{+}^{d})$
we define the set
\[
  \Gamma_{x}
  = \{ (t,z) \in [0,1] \times \mathbb{R}_{+}^{d} : z \in [x(t-), x(t)] \},
\]
where $[a,b] = \{  \lambda a + (1-\lambda)b : 0 \leq \lambda \leq 1 \}$ for $a, b \in \mathbb{R}_{+}^{d}$. We say $(r,u)$ is a parametric representation of $\Gamma_{x}$ if it is a continuous nondecreasing function mapping $[0,1]$ onto $\Gamma_{x}$. Denote by $\Pi(x)$ the set of all parametric representations of the graph $\Gamma_{x}$. Then for $x_{1},x_{2} \in D([0,1], \mathbb{R}_{+}^{d})$
\[
  d_{M_{1}}(x_{1},x_{2})
  = \inf \{ \|r_{1}-r_{2}\|_{[0,1]} \vee \|u_{1}-u_{2}\|_{[0,1]} : (r_{i},u_{i}) \in \Pi(x_{i}), i=1,2 \}.
\]

\section{Weak convergence of partial maxima $M_{n}$}\label{s:two}

In this section we establish weak convergence of the multivariate partial maxima $M_{n}$ by generalizing the corresponding one dimensional result given in Krizmani\'{c}~\cite{Kr14}. Let $(X_{n})$ be a strictly stationary sequence of $\mathbb{R}^{d}_{+}$--valued random vectors, jointly regularly varying with index $\alpha \in (0, \infty)$ and assume Conditions~\ref{c:mixcond1} and \ref{c:mixcond2} hold. Then by (\ref{eq:Nnconv}) it holds that, as $n \to \infty$,
$$ N_{n} = \sum_{i=1}^{n}\delta_{X_{i}/a_{n}} \dto N = \sum_{i} \sum_{j} \delta_{P_{i}Q_{ij}},$$
where $(a_{n})$ is chosen as in (\ref{e:niz}).
Denote by
$\mathbf{M}_{p}(\EE^{d})$ the space of Radon point measures on $\EE^{d}$
equipped with the vague topology. Recall $M_{n} = a_{n}^{-1}
\bigvee_{i=1}^{n} X_{i} = \big(a_{n}^{-1} \bigvee_{i=1}^{n} X_{i}^{k}\big)_{k=1,\ldots,d}$.

\begin{thm}\label{t:notimeconv}
Let $(X_{n})$ be a strictly stationary sequence of $\mathbb{R}_{+}^{d}$--valued random vectors, jointly regularly varying with index $\alpha\in(0, \infty)$. Suppose that Conditions~\ref{c:mixcond1} and \ref{c:mixcond2} hold. Then, as $n \to \infty$,
$$ M_{n} \dto M = \bigvee_{i}\bigvee_{j}P_{i}Q_{ij}.$$
\end{thm}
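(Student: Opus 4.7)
The plan is to lift the point-process convergence $N_{n}\dto N$ of \eqref{eq:Nnconv} to the componentwise maxima $M_{n}$ via an avoidance-event identity and the continuous mapping theorem. For $y=(y^{1},\dots,y^{d})\in(0,\infty)^{d}$ set
$$B_{y}=\bigcup_{k=1}^{d}\{x\in\EE^{d}:x^{k}>y^{k}\}.$$
Every $x\in B_{y}$ satisfies $\|x\|>\min_{k}y^{k}>0$, so $B_{y}$ is open in $\EE^{d}$ and $\overline{B_{y}}$ is compact. Because $M_{n}$ and $M=\bigvee_{i}\bigvee_{j}P_{i}Q_{ij}$ are componentwise suprema, one has the elementary identities
$$\{M_{n}\le y\}=\{N_{n}(B_{y})=0\}\qquad\text{and}\qquad\{M\le y\}=\{N(B_{y})=0\}.$$

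Next I would apply the continuous mapping theorem to the evaluation $m\mapsto m(B_{y})$ on $\mathbf M_{p}(\EE^{d})$, which is continuous at any Radon measure $m$ with $m(\partial B_{y})=0$; compactness of $\overline{B_{y}}$ obviates any integrability concern. Since $\partial B_{y}$ lies in the union of the hyperplanes $H_{k,y^{k}}=\{x\in\EE^{d}:x^{k}=y^{k}\}$, it suffices to show that $\PP(N(H_{k,c})>0)=0$ for a.e.\ $c>0$. Using that $N$ is almost surely a countable sum of point masses, Fubini gives
$$\int_{0}^{\infty}\PP(N(H_{k,c})>0)\,dc=\E\left[\int_{0}^{\infty}\mathbf 1\{N(H_{k,c})>0\}\,dc\right]=0,$$
because the inner integral is the Lebesgue measure of the countable set of $k$-th coordinates of the atoms of $N$. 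Hence for every $y$ in a set $\mathcal D\subseteq(0,\infty)^{d}$ of full Lebesgue measure, the continuous mapping theorem yields $N_{n}(B_{y})\dto N(B_{y})$, and since these are $\mathbb{Z}_{+}$-valued,
$$\PP(M_{n}\le y)=\PP(N_{n}(B_{y})=0)\To\PP(N(B_{y})=0)=\PP(M\le y).$$

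Finally, $\mathcal D$ is dense in $(0,\infty)^{d}$ and the joint distribution function of $M$ is right-continuous, so pointwise convergence on $\mathcal D$ extends to all continuity points of the law of $M$, yielding $M_{n}\dto M$ in $\mathbb{R}_{+}^{d}$. The only genuinely non-trivial step is the a.s.\ vanishing of $N$ on $\partial B_{y}$ for a dense set of $y$; the Fubini calculation above handles this, after which the argument reduces to standard continuous mapping for point measures and routine CDF bookkeeping.
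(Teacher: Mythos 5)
Your proof is correct, and it takes a genuinely different route from the paper. The paper also starts from $N_{n}\dto N$, but it proceeds through a truncation: it introduces the functional $T_{\epsilon}$ that keeps only coordinates at least $\epsilon$, proves its continuity on a suitable null-boundary set to get $M_{n}[\epsilon,\infty)\dto M[\epsilon,\infty)$, notes $M[\epsilon,\infty)\to M$ a.s., and then removes the truncation with a converging-together (Slutsky-type) argument whose key estimate is a Markov/Karamata bound on $\PP(\|M_{n}(0,\epsilon)\|>\delta)$. You bypass the truncation entirely by working with void probabilities of the sets $B_{y}$: since $B_{y}$ is already bounded away from the origin, the evaluation $m\mapsto m(B_{y})$ is a legitimate continuous-mapping target and no small-values estimate is needed; your Fubini computation, which replaces the paper's appeal to the ``no fixed atoms'' property of $N$ (Lemma~2.1 of Davis--Mikosch), only yields a Lebesgue-full set $\mathcal{D}$ of admissible thresholds, but the sandwiching of the monotone distribution functions recovers all continuity points, so this costs nothing. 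Your argument is arguably more elementary (for maxima, unlike for partial sums, there is no genuine truncation error to control, and indeed the paper's Karamata step is heavier than the problem requires). What the paper's route buys is a set of reusable intermediate facts: the relations $M_{n}[\epsilon,\infty)=T_{\epsilon}(N_{n})\dto T_{\epsilon}(N)=M[\epsilon,\infty)$ and $M[\epsilon,\infty)\to M$ a.s.\ are invoked again in the proof of the functional limit theorem (Theorem~\ref{t:functconvergence}) to identify $M^{(u)}(1)\eqd M(u,\infty)$ and to deduce $M^{(u)}(1)\dto M$ as $u\to 0$; if one adopted your proof, those facts would have to be established separately there.
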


\begin{proof}
Let $\epsilon >0$ be arbitrary. The mapping
$T_{\epsilon} \colon \mathbf{M}_{p}(\EE^{d}) \to \mathbb{R}_{+}^{d}$ defined by
$$ T_{\epsilon} \Big( \sum_{i=1}^{\infty}\delta_{x_{i}} \Big) =
\bigg( \bigvee_{i=1}^{\infty}x_{i}^{k} 1_{\{ x_{i}^{k} \in [\epsilon, \infty)
\}} \bigg)_{k=1,\ldots,d}$$
is continuous on the set
$$\Lambda_{\epsilon} = \{ \eta \in
\mathbf{M}_{p}(\EE^{d}) : \eta (\{ (y_{1}, \ldots, y_{d}) : y_{i}= \epsilon \ \textrm{for some} \ i \})=0 \}.$$
One can see this by showing the continuity of the components
$$ T_{\epsilon}^{k} \Big( \sum_{i=1}^{\infty}\delta_{x_{i}^{k}} \Big) = \bigvee_{i=1}^{\infty}x_{i}^{k} 1_{\{ x_{i}^{k} \in [\epsilon, \infty)
\}}$$ (cf. the one dimensional case in Krizmani\'{c}~\cite{Kr14}).

$N$ has no fixed atoms (see Lemma 2.1 in
Davis and Mikosch~\cite{DaMi98}), i.e. $\PP (N \in \Lambda_{\epsilon})=1$, and therefore by the
continuous mapping theorem we get
\begin{equation}\label{e:convTeps}
M_{n}[\epsilon, \infty)= T_{\epsilon}(N_{n}) \dto T_{\epsilon}(N) = M[\epsilon, \infty) \qquad \textrm{as} \ n \to \infty,
\end{equation}
with the notation
$$ M_{n}B = (M_{n}^{k} B)_{k=1,\ldots,d} =  \bigg( a_{n}^{-1} \bigvee_{i=1}^{n}X_{i}^{k} 1_{\{ a_{n}^{-1}X_{i}^{k} \in B \}} \bigg)_{k=1,\ldots,d},$$
and
$$ M B = (M^{k} B)_{k=1,\ldots,d} = \bigg( \bigvee_{i=1}^{\infty} \bigvee_{j=1}^{\infty}P_{i}Q_{ij}^{k} 1_{\{ P_{i}Q_{ij}^{k} \in B \}} \bigg)_{k=1,\ldots,d}$$
for any Borel set $B$ in $\mathbb{R}_{+}$.
Obviously
\begin{equation}\label{e:convas}
M[\epsilon, \infty) \to M(0,\infty) = M
\end{equation}
almost surely as $\epsilon \to 0$.

In order to obtain $M_{n} \dto M$, i.e. $M_{n}(0,\infty) \dto M(0,\infty)$ as $n \to \infty$, by Theorem 3.5 in Resnick~\cite{Re07} it suffices to prove
that
\begin{equation}\label{eq:slutskycond}
 \lim_{\epsilon \to 0} \limsup_{n \to \infty} \PP (\| M_{n}[\epsilon,\infty) - M_{n}(0,\infty) \| > \delta)=0
 \end{equation}
for any $\delta >0$.
Since for arbitrary real numbers $x_{1}, \ldots, x_{n}, y_{1},
\ldots, y_{n}$ the following inequality
 \begin{equation}\label{e:maxineq}
 \Big| \bigvee_{i=1}^{n}x_{i} - \bigvee_{i=1}^{n}y_{i} \Big| \leq
\bigvee_{i=1}^{n}|x_{i}-y_{i}|
 \end{equation}
  holds, note that
\begin{equation*}
  |M_{n}^{k}[\epsilon,\infty) - M_{n}^{k}(0,\infty)|  \leq M_{n}^{k}(0,\epsilon)
\end{equation*}
for all $k=1,\ldots,d$, and this yields
\begin{equation}\label{eq:slutskypom}
\| M_{n}[\epsilon,\infty) - M_{n}(0,\infty) \| = \bigvee_{k=1}^{d} |M_{n}^{k}[\epsilon,\infty) - M_{n}^{k}(0,\infty)|  \leq \| M_{n}(0,\epsilon)\|.
\end{equation}

Take now an arbitrary $s > \alpha$. Then using stationarity and Markov's inequality we get the bound
\begin{eqnarray}\label{eq:slutsky1}
   \nonumber \PP ( \|M_{n}(0,\epsilon)\| > \delta) & \leq & n \PP \bigg( \bigvee_{k=1,\ldots,d} \frac{X_{1}^{k}}{a_{n}} 1_{\{X_{1}^{k} < \epsilon a_{n}\}} > \delta \bigg)\\[0.4em]
    \nonumber & \hspace*{-10em} \leq & \hspace*{-5em} n \sum_{k=1}^{d} \PP \bigg( \frac{X_{1}^{k}}{a_{n}} 1_{\{X_{1}^{k} < \epsilon a_{n}\}} > \delta \bigg) \leq n \sum_{k=1}^{d} \frac{1}{\delta^{s} a_{n}^{s}} \E ( (X_{1}^{k})^{s} 1_{\{ X_{1}^{k} < \epsilon a_{n} \}})\\[0.4em]
    \nonumber & \hspace*{-10em} = & \hspace*{-5em} \frac{n}{\delta^{s} a_{n}^{s}} \sum_{k=1}^{d} \Big[ \E ( (X_{1}^{k})^{s} 1_{ \{X_{1}^{k} < \epsilon a_{n}, \|X_{1}\| > \epsilon a_{n} \} }) + \E ( (X_{1}^{k})^{s} 1_{ \{ X_{1}^{k} < \epsilon a_{n}, \|X_{1}\| \leq \epsilon a_{n} \} }) \Big]\\[0.4em]
    \nonumber & \hspace*{-10em} \leq & \hspace*{-5em} \frac{n}{\delta^{s}} \sum_{k=1}^{d} \Big[ \epsilon^{s} \PP (\|X_{1}\| > \epsilon a_{n} ) + \E \Big( \frac{\|X_{1}\|^{s}}{a_{n}^{s}} 1_{\{ \|X_{1}\| \leq \epsilon a_{n} \} } \Big) \Big]\\[0.4em]
     & \hspace*{-10em} = &  \hspace*{-5em} \frac{\epsilon^{s}d}{\delta^{s}} \cdot n \PP( \|X_{1}\| >  \epsilon a_{n}) \bigg[ 1 + \frac{\E (\|X_{1}\|^{s} 1_{\{ \|X_{1}\| < \epsilon a_{n} \}})}{\epsilon^{s} a_{n}^{s} \PP(\|X_{1}\|>\epsilon a_{n})} \bigg].
 \end{eqnarray}
Since the distribution of $\|X_{1}\|$ is regularly varying with index
$\alpha$, using (\ref{e:niz}) it follows immediately that
$$ n \PP(\|X_{1}\|> \epsilon a_{n}) \to \epsilon^{-\alpha}$$
 as $n \to \infty$. By Karamata's theorem
 $$ \lim_{n \to \infty} \frac{\E(\|X_{1}\|^{s} \, 1_{ \{ \|X_{1}\| < \epsilon a_{n} \}
            })}{\epsilon^{s} a_{n}^{s}\PP(\|X_{1}\|> \epsilon a_{n})} =
            \frac{\alpha}{s-\alpha}.$$
  Thus from (\ref{eq:slutsky1}) we get
   $$ \limsup_{n \to \infty} \PP(\|M_{n}(0,\epsilon)\|>\delta) \leq \frac{\epsilon^{s-\alpha}d}{\delta^{s}} \Big[ 1 +
 \frac{\alpha}{s-\alpha} \Big].$$
 Letting $\epsilon \to 0$ we finally obtain
 $$ \lim_{\epsilon \to 0} \limsup_{n \to \infty} \PP(\|M_{n}(0,\epsilon)\|>\delta) = 0,$$
 and taking into account (\ref{eq:slutskypom}), relation (\ref{eq:slutskycond}) follows. Hence $ M_{n} \dto M$ as $n \to \infty$.
\end{proof}

\begin{rem}
From the representation in (\ref{eq:Nnconv}) and the fact that $\sup_{j}\|Q_{ij}\|=1$ it follows that $\|M\|$ is a Fr\'{e}chet random variable, since
\begin{eqnarray*}
  \PP(\|M\| \leq x) &=& \PP \bigg( \max_{k=1,\ldots,d} \bigvee_{i}\bigvee_{j}P_{i}Q_{ij}^{k} \leq x \bigg) = \PP \bigg( \bigvee_{i}P_{i} \leq x \bigg) \\[0.4em]
   &=& \PP \Big( \sum_{i}\delta_{P_{i}}(x,\infty) = 0 \Big) = e^{-\kappa(x,\infty)} = e^{-\theta x^{-\alpha}}
\end{eqnarray*}
 for $x>0$.
\end{rem}

\section{Functional convergence of partial maxima processes $M_{n}(\,\cdot\,)$}\label{s:three}

In this section we show the convergence of the partial maxima
process
\begin{equation*}
  M_{n}(t) =
  \bigvee_{i=1}^{\floor{nt}} \frac{X_{i}}{a_{n}} = \bigg( \bigvee_{i=1}^{\floor{nt}} \frac{X_{i}^{k}}{a_{n}}\bigg)_{k=1,\ldots,d}, \quad t \in [0,1],
\end{equation*}
to an extremal process in the space $D([0,1], \mathbb{R}_{+}^{d})$
equipped with Skorohod weak $M_1$ topology. Similar to the one dimensional case treated in Krizmani\'{c}~\cite{Kr14} we first represent $M_n(\,\cdot\,)$ as the image of the
time-space point process $N_n^{*}$ under a certain maximum
functional. Then, using certain continuity properties of this
functional, the continuous mapping theorem and the standard
"finite dimensional convergence plus tightness" procedure we
transfer the weak convergence of $N_n^{*}$ in (\ref{eq:convpp}) to
weak convergence of $M_n(\,\cdot\,)$.

Extremal processes can be derived from Poisson processes in the following way. Let $\xi = \sum_{k}\delta_{(t_{k}, j_{k})}$ be a Poisson process on $[0,\infty) \times \EE^{d}$ with mean measure $\lambda \times \nu$, where $\lambda$ is the Lebesgue measure and $\nu$ is a measure on $\EE^{d}$ satisfying
$$\nu (\{ x \in \EE^{d} : \|x\| > \delta \}) < \infty$$
for any $\delta >0$. The extremal process $\widetilde{M}(\,\cdot\,)$ generated by $\xi$ is defined by
$$ \widetilde{M}(t) = \bigvee_{t_{k} \leq t}j_{k}, \qquad t>0.$$
Then for $x \in \mathbb{E}^{d}$ and $t>0$ it holds that
$$ \PP ( \widetilde{M}(t) \leq x) = e^{-t \nu([[0,x]]^{c})},$$
with the notation that for two vectors $y=(y^{1}, \ldots, y^{d})$ and $z=(z^{1}, \ldots, z^{d})$, $y \leq z$ means $y^{k} \leq z^{k}$ for all $k=1,\ldots, d$
(cf. Resnick~\cite{Re07}, section 5.6). The measure $\nu$ is called the exponent measure.

Now fix $0 < v < u < \infty$ and define the maximum functional
$$
  \phi^{(u)} \colon \mathbf{M}_{p}([0,1] \times \EE^{d}_{v}) \to
  D([0,1], \mathbb{R}_{+}^{d})
$$
 by
 $$ \phi^{(u)} \Big( \sum_{i}\delta_{(t_{i}, (x_{i}^{1}, \ldots, x_{i}^{d}))} \Big) (t)
  =  \Big( \bigvee_{t_{i} \leq t} x_{i}^{k} \,1_{\{u < x_i^{k} < \infty\}} \Big)_{k=1,\ldots,d}, \qquad t \in [0,
  1],$$
  where the supremum of an empty set may be taken, for
  convenience, to be $0$.
 $\phi^{(u)}$ is well defined because $[0,1] \times
\EE^{d}_{u}$ is a relatively compact subset of $[0,1] \times \EE^{d}_{v}$.
The space $\mathbf{M}_p([0,1] \times \EE^{d}_{v})$ of Radon point
measures on $[0,1] \times \EE^{d}_{v}$ is equipped with the vague
topology and $D([0,1], \mathbb{R}_{+}^{d})$ is equipped with the weak $M_1$ topology. Let
\begin{multline*}
  \Lambda =
  \{ \eta \in \mathbf{M}_{p}([0,1] \times \EE^{d}_{v}) :
    \eta ( \{0,1 \} \times \EE^{d}_{u}) = 0 \ \textrm{and} \\[0.3em]
     \eta ([0,1] \times \{ x=(x^{1},\ldots,x^{d}) : x^{i} \in \{u, \infty \} \ \textrm{for some} \ i \}) =0 \}.
\end{multline*}
 Then the point process $N^{(v)}$ defined in
(\ref{eq:convpp}) almost surely belongs to the set $\Lambda$, see
Lemma 3.1 in Basrak and Krizmani\'{c}~\cite{BaKr}. Now we will show that
$\phi^{(u)}$ is continuous on the set $\Lambda$.

\begin{lem}\label{l:contfunct}
The maximum functional $\phi^{(u)} \colon \mathbf{M}_{p}([0,1]
\times \EE^{d}_{v}) \to D([0,1], \mathbb{R}_{+}^{d})$ is continuous on the set $\Lambda$,
when $D([0,1], \mathbb{R}_{+}^{d})$ is endowed with the weak $M_{1}$ topology.
\end{lem}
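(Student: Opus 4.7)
The plan is to exploit the fact, noted in Section~\ref{ss:j1m1}, that the weak $M_1$ topology on $D([0,1], \mathbb{R}_+^d)$ coincides with the product topology induced by the metric $d_p$ from~(\ref{e:defdp}). Consequently it suffices to prove that for each fixed $k \in \{1,\ldots,d\}$ the coordinate map $\phi^{(u),k} \colon \Lambda \to D([0,1], \mathbb{R}_+)$,
\[
\phi^{(u),k}(\eta)(t) = \bigvee_{t_i \leq t} x_i^k \, 1_{\{u < x_i^k < \infty\}},
\]
is continuous when $\Lambda$ carries the vague topology and $D([0,1], \mathbb{R}_+)$ carries the standard univariate $M_1$ topology $d_{M_1}$.

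Fix $\eta_n \vto \eta$ with $\eta \in \Lambda$ and enumerate the atoms of $\eta$ in the relatively compact set $B = [0,1] \times \EE^d_u$ as $(t_1, x_1), \ldots, (t_N, x_N)$. Since $\eta \in \Lambda$, the boundary $\partial B = (\{0,1\} \times \overline{\EE^d_u}) \cup ([0,1] \times \{x : \|x\| = u\})$ is $\eta$-null (note that $\|x\|=u$ forces some coordinate to equal $u$). Standard properties of vague convergence of point measures (see e.g.~\cite{Re07}) then provide, for $n$ large enough, a labeling $(t_j^n, x_j^n)$, $j = 1, \ldots, N$, of the atoms of $\eta_n$ in $B$ with $(t_j^n, x_j^n) \to (t_j, x_j)$. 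Any atom of $\eta_n$ whose $k$-th coordinate exceeds $u$ automatically has norm exceeding $u$ and so appears in this labeling. Since $\Lambda$ forbids atoms with some coordinate equal to $u$, we have $x_j^k \neq u$ for every $j$ and $k$, and hence $1_{\{(x_j^n)^k > u\}} = 1_{\{x_j^k > u\}}$ for $n$ large.

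Setting $J_k = \{j : x_j^k > u\}$, it follows that for $n$ sufficiently large
\[
\phi^{(u), k}(\eta_n)(t) = \bigvee_{j \in J_k,\, t_j^n \leq t} (x_j^n)^k, \qquad \phi^{(u), k}(\eta)(t) = \bigvee_{j \in J_k,\, t_j \leq t} x_j^k,
\]
and both are nondecreasing step functions assembled from finitely many jumps whose times and heights converge. The closing step is to construct parametric representations of the completed graphs of these two functions by piecewise linear interpolation through the converging jump points; uniform convergence of the two representations yields $d_{M_1}(\phi^{(u), k}(\eta_n), \phi^{(u), k}(\eta)) \to 0$. This is essentially the one-dimensional argument from Krizmani\'{c}~\cite{Kr14}, carried out coordinate by coordinate. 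The main obstacle is the non-generic case in which several indices $j \in J_k$ share a common time $t_j$: then $\phi^{(u), k}(\eta)$ has a single jump at $t_j$, while $\phi^{(u), k}(\eta_n)$ may have several jumps at nearby but distinct times $t_j^n$, which rules out $J_1$ convergence. Monotonicity of the approximating step functions is what makes $M_1$ convergence go through in this case, and the possibility that different coordinates $k$ are forced to jump at different approximating times is precisely why the weak $M_1$ topology, rather than the standard $M_1$ topology, is the natural target topology in the multivariate setting.
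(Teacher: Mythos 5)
Your proposal is correct and follows essentially the same route as the paper: reduce to coordinatewise convergence via the identification of the weak $M_{1}$ topology with the product topology induced by $d_{p}$, and then run the one-dimensional maximum-functional argument (the content of Lemma~4.1 in Krizmani\'{c}~\cite{Kr14}) for each coordinate. The only difference is that you spell out the one-dimensional atom-labeling and parametric-representation step, which the paper simply cites.
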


\begin{proof}
Take an arbitrary $\eta \in \Lambda$ and suppose that $\eta_{n} \vto \eta$ in $\mathbf{M}_p([0,1] \times
\EE_{v}^{d})$. We need to show that
$\phi^{(u)}(\eta_n) \to \phi^{(u)}(\eta)$ in $D([0,1],
\mathbb{R}_{+}^{d})$ according to the $WM_1$ topology. By
Theorem~12.5.2 in Whitt~\cite{Whitt02}, it suffices to prove that,
as $n \to \infty$,
$$ d_{p}(\phi^{(u)}(\eta_{n}), \phi^{(u)}(\eta)) =
\max_{k=1,\ldots,d}d_{M_{1}}(\phi^{(u)\,k}(\eta_{n}),
\phi^{(u)\,k}(\eta)) \to 0,$$
 where $\phi^{(u)}(\xi)=(\phi^{(u)\,k}(\xi))_{k=1,\ldots,d}$ for
 $\xi \in \mathbf{M}_{p}([0,1] \times \EE_{v}^{d})$.

 Now one can follow, with small modifications, the lines in the proof of Lemma~4.1 in Krizmani\'{c}~\cite{Kr14} to obtain
$d_{M_{1}}(\phi^{(u)\,k}(\eta_{n}), \phi^{(u)\,k}(\eta)) \to 0$ as
$n \to \infty$. Therefore $d_{p}(\phi^{(u)}(\eta_{n}),
\phi^{(u)}(\eta)) \to 0$ as $n \to \infty$, and we conclude that
$\phi^{(u)}$ is continuous at $\eta$.
\end{proof}

\smallskip

\begin{lem}\label{l:fddmultconv}
 Assume $\xi_{n}=\sum_{i}\delta_{(t_{i}^{(n)}, j_{i}^{(n)})}$, $n \geq 0$, are Poisson processes on $[0, \infty) \times \EE^{d}$ with mean measures $\lambda \times \beta_{n}$, and let $H_{n}$ be the corresponding extremal processes generated by the $\xi_{n}$'s. If
 \begin{equation}\label{e:mconvexpr}
 \beta_{n} \vto \beta_{0} \qquad \textrm{as} \ n \to \infty,
 \end{equation}
  then the finite dimensional distributions of $H_{n}(\,\cdot\,)$ converge to the finite dimensional distributions of $H_{0}(\,\cdot\,)$ as $n \to \infty$.
\end{lem}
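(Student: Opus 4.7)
The plan is to combine the independent-increments structure of the Poisson processes $\xi_n$ over disjoint time--space rectangles with the standard closed-form expression for the distribution of an extremal process.

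Fix $0 < t_1 < t_2 < \cdots < t_m$, set $t_0 = 0$, and take $x_1,\ldots,x_m \in \EE^{d}$ with all coordinates strictly positive. Writing $\wedge$ for the componentwise minimum, set
\[
  z_i := x_i \wedge x_{i+1} \wedge \cdots \wedge x_m, \qquad i=1,\ldots,m.
\]
Decomposing $H_n(t_j) = \bigvee_{i \leq j} \widetilde H_n^{(i)}$, where $\widetilde H_n^{(i)} := \bigvee_{t_{i-1} < t_k^{(n)} \leq t_i} j_k^{(n)}$, the event $\{H_n(t_j) \leq x_j : j = 1, \ldots, m\}$ coincides with $\bigcap_{i=1}^m \{\widetilde H_n^{(i)} \leq z_i\}$, i.e.\ with the event that $\xi_n$ places no point in the pairwise disjoint rectangles $(t_{i-1}, t_i] \times [[0, z_i]]^c$. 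Independence of the Poisson process over disjoint Borel sets then yields
\[
  \PP\bigl(H_n(t_j) \leq x_j,\ j = 1, \ldots, m\bigr) = \prod_{i=1}^{m} \exp\bigl(-(t_i - t_{i-1})\,\beta_n([[0, z_i]]^c)\bigr),
\]
and the same identity holds for $H_0$ with $\beta_0$ in place of $\beta_n$.

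Each set $[[0, z_i]]^c$ is open in $\EE^d$ and, because $z_i$ has strictly positive coordinates, is bounded away from the origin; hence its closure is compact in $\EE^d$. For any $(x_1, \ldots, x_m)$ for which $\beta_0(\partial [[0, z_i]]^c) = 0$ for every $i$, the Portmanteau theorem applied to the vague convergence $\beta_n \vto \beta_0$ gives $\beta_n([[0, z_i]]^c) \to \beta_0([[0, z_i]]^c)$, so the product above converges to its $\beta_0$-analogue. Thus the joint cumulative distribution functions of $(H_n(t_1), \ldots, H_n(t_m))$ converge to those of $(H_0(t_1), \ldots, H_0(t_m))$ at every such point.

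To conclude, note that $\partial [[0, z_i]]^c \subseteq \bigcup_{k=1}^{d} \{y \in \EE^d : y^k = z_i^k\}$ and that, for each coordinate $k$, the set of values $c$ with $\beta_0(\{y : y^k = c\}) > 0$ is at most countable. Therefore the set of admissible $(x_1, \ldots, x_m)$ is dense in $(\EE^d)^m$ and is contained in the continuity set of the joint distribution function of the limit, so convergence of the CDFs on it characterises convergence of finite-dimensional distributions. The only (mild) obstacle is precisely this continuity-set issue required to upgrade vague convergence to convergence on the specific relatively compact Borel sets $[[0, z_i]]^c$; it is handled exactly as just described, by restricting to the dense continuity set of the limiting joint law.
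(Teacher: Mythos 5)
Your proof is correct and follows essentially the same route as the paper: the same product formula for the finite-dimensional distributions combined with convergence of $\beta_n([[0,z]]^c)$ at continuity points deduced from the vague convergence $\beta_n \vto \beta_0$. The only difference is that you derive the product formula and the continuity-point/density argument explicitly, where the paper asserts the former ``as in the univariate case'' and cites Lemma 6.1 of Resnick for the latter.
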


\begin{proof}
By Lemma 6.1 in Resnick~\cite{Re07}, from (\ref{e:mconvexpr}) we obtain that, as $n \to \infty$,
\begin{equation}\label{e:mconvexpr2}
\beta_{n}([[0,x]]^{c}) \to \beta_{0}([[0,x]]^{c})
\end{equation}
for all continuity points $x$ of $\beta_{0}([[0,\,\cdot\,]]^{c})$.

Similar to the univariate case, the finite dimensional distributions of $H_{n}(\,\cdot\,) = \bigvee_{t_{i}^{(n)} \leq \, \cdot}j_{i}^{(n)}$ are of the form
\begin{eqnarray*}
   \PP(H_{n}(t_{1}) \leq x_{1}, \ldots, H_{n}(t_{m}) \leq x_{m}) & &  \\[0.3em]
   & \hspace*{-28em} = & \hspace*{-14em} e^{-t_{1}\beta_{n}([[0, \bigwedge_{i=1}^{m}x_{i}]]^{c})} \cdot e^{-(t_{2}-t_{1}) \beta_{n}([[0, \bigwedge_{i=2}^{m}x_{i}]]^{c})} \cdot \ldots \cdot e^{-(t_{m}-t_{m-1})\beta_{n}([[0, x_{m}]]^{c})},
\end{eqnarray*}
for $0 \leq t_{1} < t_{2} < \ldots < t_{m} \leq 1$ and $x_{1},\ldots, x_{m} \in \mathbb{E}^{d}$.
Letting $n \to \infty$ and using (\ref{e:mconvexpr2}) we immediately obtain that the right hand side in the last equation above converges (in the continuity points $x_{1}, \ldots, x_{m}$ of $\beta_{0}([[0,\,\cdot]]^{c})$) to
$$ e^{-t_{1} \beta_{0}([[0, \bigwedge_{i=1}^{m}x_{i}]]^{c})} \cdot e^{-(t_{2}-t_{1}) \beta_{0}([[0, \bigwedge_{i=2}^{m}x_{i}]]^{c})} \cdot \ldots \cdot e^{-(t_{m}-t_{m-1}) \beta_{0}([[0, x_{m}]]^{c})}.$$
But since this limit is in fact $\PP(H_{0}(t_{1}) \leq x_{1}, \ldots, H_{0}(t_{m}) \leq x_{m})$, we conclude that the finite dimensional distributions of $H_{n}(\,\cdot\,)$ converge to the finite dimensional distributions of $H_{0}(\,\cdot\,)$ as $n \to \infty $.
\end{proof}

\begin{thm}\label{t:functconvergence}
Let $(X_{n})$ be a strictly stationary sequence of $\mathbb{R}^{d}_{+}$--valued random vectors, jointly regularly varying with index $\alpha >0$. Suppose that Conditions~\ref{c:mixcond1} and~\ref{c:mixcond2} hold. Then the partial maxima stochastic process
$$  M_{n}(t) = \bigvee_{i=1}^{\lfloor nt
   \rfloor}\frac{X_{i}}{a_{n}}, \qquad t \in [0,1],$$
satisfies
$$ M_{n}(\,\cdot\,) \dto \widetilde{M}(\,\cdot\,) \qquad \textrm{as} \ n \to \infty,$$
in $D([0,1], \mathbb{R}_{+}^{d})$ endowed with the weak $M_{1}$ topology, where
$\widetilde{M}(\,\cdot\,)$ is an extremal process.
\end{thm}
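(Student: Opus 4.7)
The plan is to truncate at level $u$ via the maximum functional $\phi^{(u)}$ of Lemma~\ref{l:contfunct}, pass to the limit in $n$ using the point process convergence of Theorem~\ref{t:convpp} and the continuous mapping theorem, and then send the truncation level $u \to 0$ via Lemma~\ref{l:fddmultconv}. The bridge between the two limits will be a deterministic pathwise bound on the truncation error, which is very cheap for the partial-maxima process because of its monotonicity.

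Fix a small $u > 0$ and any $v \in (0,u)$. The truncated partial-maxima process
$$M_n^{(u),k}(t) := \bigvee_{i \le \lfloor nt \rfloor} \frac{X_i^k}{a_n}\,\mathbf{1}_{\{X_i^k/a_n > u\}},\qquad k=1,\ldots,d,$$
equals $\phi^{(u)}\bigl(N_n^{*}|_{[0,1]\times \EE^d_v}\bigr)$. By Theorem~\ref{t:convpp}, Lemma~\ref{l:contfunct}, and the continuous mapping theorem,
$$M_n^{(u)}(\cdot)\ \dto\ \widetilde{M}^{(u)}(\cdot) := \phi^{(u)}(N^{(v)})$$
in $D([0,1],\mathbb{R}_+^d)$ with the weak $M_1$ topology. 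The cluster representation in Theorem~\ref{t:convpp} identifies $\widetilde{M}^{(u)}$ as the extremal process generated by a Poisson process on $[0,\infty) \times \EE^d_u$ with mean measure $\lambda \times \nu^{(u)}$, where $\nu^{(u)}$ is a Radon measure on $\EE^d_u$ determined by the law of the tail-process clusters $(Z_{ij})_j$.

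Next I would show $\nu^{(u)} \vto \nu$ as $u \to 0$ for some Radon measure $\nu$ on $\EE^d$. Lemma~\ref{l:fddmultconv} then yields $\widetilde{M}^{(u)} \fidi \widetilde{M}$, where $\widetilde{M}$ is the extremal process with exponent measure $\nu$, and a monotone-coupling (or monotone-tightness) argument upgrades this to $\widetilde{M}^{(u)} \dto \widetilde{M}$ in the weak $M_1$ topology. The truncation error is controlled pathwise: since $M_n^{(u),k}(t) = M_n^k(t)$ whenever $M_n^k(t) > u$ and both values lie in $[0,u]$ otherwise, one has $\|M_n(t) - M_n^{(u)}(t)\| \le u$ for every $t$, and a direct construction of parametric representations (using that $M_n^{(u),k}$ has a subset of the jump times of $M_n^k$) yields $d_p(M_n,M_n^{(u)}) \le u$ deterministically for every $n$. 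A standard approximation argument combining $M_n^{(u)} \dto \widetilde{M}^{(u)}$ with $\widetilde{M}^{(u)} \dto \widetilde{M}$ and the uniform bound $d_p(M_n,M_n^{(u)}) \le u$ then gives $M_n(\cdot) \dto \widetilde{M}(\cdot)$.

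The hard part will be the identification step: rigorously pinning down $\nu$ in terms of the cluster law of the tail process $(Y_n)$, verifying $\nu^{(u)} \vto \nu$ as $u \to 0$, and producing the coupling or tightness argument that lifts the finite-dimensional convergence of $\widetilde{M}^{(u)}$ to full weak convergence in the weak $M_1$ topology. Whatever $\nu$ turns out to be, it must be consistent with Theorem~\ref{t:notimeconv}: the marginal law of $\|\widetilde{M}(1)\|$ should match the Fr\'echet distribution $e^{-\theta x^{-\alpha}}$ obtained in the remark following that theorem.
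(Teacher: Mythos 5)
Your architecture is exactly the paper's: truncate at level $u$, push the point process convergence of Theorem~\ref{t:convpp} through $\phi^{(u)}$ via Lemma~\ref{l:contfunct} and continuous mapping, let $u\downarrow 0$ with Lemma~\ref{l:fddmultconv}, and close the gap with a Slutsky-type argument using the fact that the truncation error is uniformly at most $u$ (your deterministic bound $d_{p}(M_{n},M_{n}^{(u)})\le u$ is correct and is essentially how the paper kills that term). However, you have deferred precisely the two steps where the real work lies, and you do not supply the ideas that make them go through. First, you assert ``I would show $\nu^{(u)}\vto\nu$ for some Radon measure $\nu$'' without any mechanism for producing $\nu$ or proving the convergence. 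The paper's device is to link back to the timeless result: $M^{(u)}(1)=M_{n}(u,\infty)$-limits give $M^{(u)}(1)\eqd M(u,\infty)$, and $M(u,\infty)\to M$ almost surely as $u\to 0$ by monotone exhaustion, so the distribution functions $F_{u}(x)=e^{-\nu^{(u)}([[0,x]]^{c})}$ converge to $F=\PP(M\le\cdot)$. Each $F_{u}$ is max-infinitely divisible, the class of max-i.d.\ laws is closed under weak convergence (Resnick 1987, Proposition 5.1), so $F$ has an exponent measure $\nu$ (Proposition 5.8 there), whence $\nu^{(u)}([[0,x]]^{c})\to\nu([[0,x]]^{c})$ at continuity points and Lemma 6.1 of Resnick (2007) upgrades this to $\nu^{(u)}\vto\nu$. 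Without some such argument your $\nu$ does not exist yet, and Lemma~\ref{l:fddmultconv} has nothing to apply to.

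Second, your ``monotone-coupling (or monotone-tightness) argument'' for upgrading $\widetilde{M}^{(u)}\fidi\widetilde{M}$ to weak convergence in $WM_{1}$ is not a proof. A coupling of the processes $M^{(u)}$ across different $u$ on one probability space would require knowing that the family $\{\nu^{(u)}\}$ is consistent (e.g.\ that $\nu^{(u)}$ is a restriction of a single measure), which is not given and is in fact part of what must be established. The paper instead reduces to coordinates: each $M^{(u)\,k}(\cdot)$ converges in $D([0,1],\mathbb{R}_{+})$ with the $M_{1}$ topology by the univariate theorem of Krizmani\'{c} (2014), hence each coordinate family is tight, and Lemma 3.2 of Whitt (2007) gives tightness of $\{M^{(u)}\}$ in the product ($d_{p}$, i.e.\ $WM_{1}$) topology; combined with the finite-dimensional convergence from Lemma~\ref{l:fddmultconv} this yields $M^{(u)}\dto\widetilde{M}$. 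You should either carry out this coordinatewise reduction or give a genuine substitute; as written, the two steps you label ``the hard part'' are exactly the content of the theorem beyond the continuous-mapping boilerplate, and the proposal is incomplete without them.
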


\begin{rem}
The exponent measure $\nu$ of the limiting process $\widetilde{M}(\,\cdot\,)$ in the theorem is the vague limit of the sequence of measures $(\nu^{(u)})$ ($u >0$) as $u \downarrow 0$, with $\nu^{(u)}$ being defined by
\begin{equation*}
\label{E:nuu}
 \begin{array}{rl}
 \nu^{(u)}(((x, y]]) & = \displaystyle u^{-\alpha} \, \PP \biggl( u \bigvee_{i \ge 0} \big( Y_i^{j} \, 1_{\{Y_i^{j} > 1\}} \big)_{j=1,\ldots,d} \in ((x,y]], \, \sup_{i \le -1} \|Y_i\| \le 1
  \biggr),
  \end{array}
\end{equation*}
for $x=(x^{1},\ldots,x^{d}),\,y=(y^{1},\ldots,y^{d}) \in \EE^{d}$ such
that $((x,y]]=(x^{1},y^{1}] \times \dots \times (x^{d},y^{d}]$ is
bounded away from zero. Here $(Y_{n})$ is the tail process of the sequence $(X_{n})$.
\end{rem}

\begin{proof} (\emph{Theorem~\ref{t:functconvergence}})
Using the techniques from the proof of Theorem 3.4 in Basrak and Krizmani\'{c}~\cite{BaKr} we obtain that the point process
$$ \widehat{N}^{(u)} =  \sum_{i} \delta_{(T_{i}^{(u)},\,u \bigvee_{j} (Z_{ij}^{k}1_{\{ Z_{ij}^{k}>1
 \}})_{k=1,\ldots,d})}$$
is a Poisson process with mean measure $\lambda \times \nu^{(u)}$.

Consider now $0<v<u$ and
$$  \phi^{(u)} (N_{n}^{*}\,|\,_{[0,1] \times \EE_{u}^{d}}) (\,\cdot\,)
  = \phi^{(u)} (N_{n}^{*}\,|\,_{[0,1] \times \EE_{v}^{d}}) (\,\cdot\,)
  = \bigvee_{i/n \le \, \cdot} \Big( \frac{X_{i}^{k}}{a_{n}} 1_{ \big\{ \frac{X_{i}^{k}}{a_{n}} > u
    \big\} } \Big)_{k=1,\ldots,d},$$
which by Theorem~\ref{t:convpp}, Lemma~\ref{l:contfunct} and the continuous mapping theorem converges in distribution in $D([0,1], \mathbb{R}_{+}^{d})$
under the $WM_{1}$ topology to
$$
\phi^{(u)} (N^{(v)})(\,\cdot\,)
\eqd \phi^{(u)} (N^{(v)}\,|\,_{[0,1] \times \EE_{u}^{d}})(\,\cdot\,) \eqd \bigvee_{T_{i}^{(u)} \le \, \cdot}
   \bigvee_{j} u (Z_{ij}^{k}1_{ \{ Z_{ij}^{k} > 1 \} })_{k=1,\ldots,d}.
$$
This can be rewritten as
\begin{equation}\label{eq:convaboveu}
  M_{n}^{(u)}(\,\cdot\,) := \bigvee_{i = 1}^{\lfloor n \, \cdot \, \rfloor} \Big( \frac{X_{i}^{k}}{a_{n}} 1_{ \big\{ \frac{X_{i}^{k}}{a_{n}} > u
    \big\} } \Big)_{k=1,\ldots,d} \dto M^{(u)}(\,\cdot\,) := \bigvee_{T_{i} \leq \, \cdot} K_{i}^{(u)} \quad \text{as} \ n \to \infty,
 \end{equation}
 in $D([0,1], \mathbb{R}_{+}^{d})$ under the $WM_{1}$ metric, since
 $\phi^{(u)}(N^{(u)}) = \phi^{(u)} (\widehat{N}^{(u)})\,\eqd\,\phi^{(u)} (\widetilde{N}^{(u)})$,
 where
 $$ \widetilde{N}^{(u)} = \sum_{i} \delta_{(T_{i},\,K_{i}^{(u)})}
 $$
 is a Poisson process with mean measure $\lambda \times \nu^{(u)}$.

 Note that the limiting process $M^{(u)}(\,\cdot\,)$ is an extremal process with exponent measure $\nu^{(u)}$, and therefore
 \begin{equation}\label{e:extrprdistr}
\PP (M^{(u)}(t) \leq x) = \PP(\widetilde{N}^{(u)}((0,t] \times [[0,x]]^{c})=0) = e^{-t\nu^{(u)}([[0,x]]^{c})}
\end{equation}
for $t \in [0,1]$ and $x \in \EE^{d}$. Since the function $\pi \colon D([0,1], \mathbb{R}_{+}^{d}) \to \mathbb{R}_{+}^{d}$ defined by $\pi(y)=y(1)$ is continuous (see Theorem 12.5.2 (iii) in Whitt~\cite{Whitt02}), an application of the continuous mapping theorem to relation (\ref{eq:convaboveu}) yields
\begin{equation}\label{e:convMu1}
 M_{n}^{(u)}(1) \dto M^{(u)}(1) \qquad  \textrm{as} \ n \to \infty.
\end{equation}
 Apply now the notation from the proof of Theorem~\ref{t:notimeconv} to see that $M_{n}^{(u)}(1) = M_{n}(u,\infty)$. Hence comparing (\ref{e:convTeps})  and (\ref{e:convMu1}) we conclude that $M^{(u)}(1) \eqd M (u,\infty)$. Further, from (\ref{e:convas}) it follows that
 \begin{equation}\label{e:convh1}
M^{(u)}(1) \dto M \qquad \textrm{as} \ u \to 0,
\end{equation}
 which means that
\begin{equation}\label{e:convmdf}
 F_{u}(x) := \PP (M^{(u)}(1) \leq x ) \to F(x) := \PP (M \leq x) \qquad \textrm{as} \ u \to 0,
\end{equation}
for all $x \in \EE^{d}$ that are continuity points of $F$.
From (\ref{e:extrprdistr}) we obtain
$$ F^{t}_{u}(x) = \PP (M^{(u)}(t) \leq x)$$
for $t \in [0,1]$ and $x \in \EE^{d}$, which implies that the multivariate distribution function $F_{u}$ is max-infinitely divisible (cf. Resnick~\cite{Re07}, Section 5.6). Since the class of max-infinitely divisible distributions is closed in $\mathbb{R}^{d}$ with respect to weak convergence (cf. Proposition 5.1 in Resnick~\cite{Re87}), relation (\ref{e:convmdf}) implies that $F$ is max-infinitely divisible, and hence by Proposition 5.8 in Resnick~\cite{Re87} there exists an exponent measure $\mu$ on $\EE^{d}$ such that
$$F(x) = e^{-\nu ([[0,x]]^{c})}, \qquad x \in \EE^{d}.$$
Therefore, from (\ref{e:convmdf}) we obtain, as $u \to 0$,
$$\nu^{(u)}([[0,x]]^{c}) \to \nu([[0,x]]^{c})$$
for all continuity points $x$ of $\nu([[0,\,\cdot\,]]^{c})$. Now an application of Lemma 6.1 in Resnick~\cite{Re07} yields that $\nu^{(u)} \vto \nu$ as $u \to 0$. Therefore, by Lemma~\ref{l:fddmultconv} it follows that the finite dimensional distributions of $M^{(u)}(\,\cdot\,)$ converge to the finite dimensional distributions of $\widetilde{M}(\,\cdot\,)$ as $u \to 0$, where $\widetilde{M}(\,\cdot\,)$ is the extremal process generated by the Poisson process $T=\sum_{i}\delta_{(T_{i}, K_{i})}$ with mean measure $\lambda \times \nu$, i.e. $\widetilde{M}(t) = \bigvee_{T_{i} \leq t}K_{i}$, $t \in [0,1]$.


This implies that the finite dimensional distributions of each coordinate $M^{(u) k}(\,\cdot\,)$ ($k=1,\ldots,d)$ converge to the finite dimensional distributions of $\widetilde{M}^{k}(\,\cdot\,)$ as $u \to 0$.
According to the arguments used in the univariate case (see the proof of Theorem 4.3 in Krizmani\'{c}~\cite{Kr14}) this suffices to conclude that $M^{(u) k}(\,\cdot\,) \dto \widetilde{M}^{k}(\,\cdot\,)$ in $D([0,1], \mathbb{R}_{+})$ with the $M_{1}$ topology. Hence $\{M^{(u) k} : u>0 \}$ is tight, and thus by Lemma 3.2 in Whitt~\cite{Wh07} it follows that $\{M^{(u)} : u>0 \}$ is also tight (in the space $D([0,1], \mathbb{R}_{+}^{d})$ with the product topology generated by the metric $d_{p}$).

From the convergence of finite dimensional distributions and tightness for processes $M^{(u)}(\,\cdot\,)$ we obtain the convergence in distribution, i.e. as $u \to 0$,
\begin{equation}\label{e:Muconvd}
 M^{(u)}(\,\cdot\,) \dto \widetilde{M}(\,\cdot\,)
\end{equation}
in $D([0,1], \mathbb{R}_{+}^{d})$ with the $WM_{1}$ topology.

If we show that
$$ \lim_{u \to 0}\limsup_{n \to \infty} \PP(d_{p}(M_{n}(\,\cdot\,),M_{n}^{(u)}(\,\cdot\,)) > \epsilon)=0$$
for any $\epsilon >0$, from (\ref{eq:convaboveu}) and (\ref{e:Muconvd}) by a variant of Slutsky's theorem (see Theorem 3.5 in Resnick~\cite{Re07}) it will follow that
$ M_{n}(\,\cdot\,) \dto \widetilde{M}(\,\cdot\,)$ as $n \to \infty$,
in $D([0,1], \mathbb{R}_{+}^{d})$ with the $WM_{1}$ topology.

Since the
 metric $d_{p}$ on $D([0,1], \mathbb{R}_{+}^{d})$ is bounded above by the uniform metric on
 $D([0,1], \mathbb{R}_{+}^{d})$ (see Theorem 12.10.3 in Whitt~\cite{Whitt02}), it suffices to show that
 $$ \lim_{u \downarrow 0} \limsup_{n \to \infty} \PP \biggl(
 \sup_{0 \le t \le 1} \|M_{n}^{(u)}(t) - M_{n}(t)\| >
 \epsilon \biggr)=0.$$
 Recalling the definitions and using the inequality (\ref{e:maxineq}) , we have
 \begin{eqnarray*}
     \PP \bigg(
     \sup_{0 \le t \le 1} \|M_{n}^{(u)}(t) - M_{n}(t)\| >  \epsilon \bigg) & & \\[0.3em]
    & \hspace*{-20em} = & \hspace*{-10em} \PP \bigg(
       \sup_{0 \le t \le 1} \ \max_{k=1,\ldots,d} \bigg| \bigvee_{i=1}^{\lfloor nt \rfloor} \bigg( \frac{X_{i}^{k}}{a_{n}}
       1_{ \big\{ \frac{X_{i}^{k}}{a_{n}} > u \big\} }  - \bigvee_{i=1}^{\lfloor nt \rfloor}\frac{X_{i}^{k}}{a_{n}}
        \bigg) \bigg| > \epsilon
       \bigg)\\[0.4em]
     & \hspace*{-20em} \leq & \hspace*{-10em} \PP \bigg(
       \sup_{0 \le t \le 1} \ \max_{k=1,\ldots,d}  \bigvee_{i=1}^{\lfloor nt \rfloor} \frac{X_{i}^{k}}{a_{n}}
       1_{ \big\{ \frac{X_{i}^{k}}{a_{n}} \leq u \big\} }  > \epsilon
       \bigg)\\[0.4em]
      & \hspace*{-20em} = & \hspace*{-10em} \PP \bigg(
       \Big\| \bigvee_{i=1}^{n} \bigg( \frac{X_{i}^{k}}{a_{n}}
       1_{ \big\{ \frac{X_{i}^{k}}{a_{n}} \leq u \big\} }\bigg)_{k=1,\ldots,d} \Big\| > \epsilon
       \bigg)\\[0.4em]
      & \hspace*{-20em} \leq & \hspace*{-10em} \sum_{k=1}^{d} \PP \bigg(
        \bigvee_{i=1}^{n} \frac{X_{i}^{k}}{a_{n}}
       1_{ \big\{ \frac{X_{i}^{k}}{a_{n}} \leq u \big\} }  > \epsilon
       \bigg).
 \end{eqnarray*}
 Since the last term above is equal to zero for $ u \in (0, \epsilon)$, it holds that
 $$ \lim_{u \to 0}\limsup_{n \to \infty} \PP(d_{p}(M_{n}(\,\cdot\,),M_{n}^{(u)}(\,\cdot\,)) > \epsilon)=0,$$
 and this concludes the proof.
\end{proof}

\begin{rem}\label{r:j1m1}
The $WM_{1}$ convergence in Theorem~\ref{t:functconvergence} in
general can not be replaced by the standard $M_{1}$ convergence. This is
shown in Example~\ref{ex:WM1M1}.

The problem in our proof if we consider the standard $M_{1}$ topology is Lemma~\ref{l:contfunct}, which in this case does not hold.
To see this, fix $u>0$ and define
  $$ \eta_{n} = \delta_{(\frac{1}{2}- \frac{1}{n}, (2u,0))} + \delta_{(\frac{1}{2}-\frac{1}{2n}, (0,2u))} \qquad \textrm{for} \  n \geq 3.$$
 Then $\eta_{n} \vto \eta$, where
  $$\eta = \delta_{(\frac{1}{2}, (2u,0))} + \delta_{(\frac{1}{2}, (0,2u))} \in \Lambda.$$
 It is easy to compute
 $$ \phi^{(u)\,1}(\eta_{n})(t) = 2u\,1_{[\frac{1}{2} - \frac{1}{n}, 1]}(t) \quad \textrm{and} \quad \phi^{(u)\,2}(\eta_{n})(t) = 2u\,1_{[\frac{1}{2} - \frac{1}{2n}, 1]}(t).$$
 Then
 $$ y_{n} (t) := \phi^{(u)\,1}(\eta_{n})(t) - \phi^{(u)\,2}(\eta_{n})(t) = 2u\,1_{[\frac{1}{2} - \frac{1}{2n}, \frac{1}{2} - \frac{1}{n})}(t), \quad t \in [0,1],$$
 and similarly
 $$ y(t) := \phi^{(u)\,1}(\eta)(t) - \phi^{(u)\,2}(\eta)(t)=0, \quad t \in [0,1].$$

 For all parametric representations $(r_{n}, u_{n}) \in \Pi(y_{n})$ and $(r, u) \in \Pi(y)$ we have
 $$ \|u_{n} - u \|_{[0,1]} = 2u.$$
 Hence $d_{M_{1}}(y_{n}, y) \geq 2u$ for all $n \geq 3$, which means that $d_{M_{1}}(y_{n}, y)$ does not converge to zero as $n \to \infty$.
 Since
 $$ d_{M_{1}}(y_{n}, y) \leq d_{M_{1}}(\phi^{(u)}(\eta_{n}), \phi^{(u)}(\eta))$$
 (see Theorem 12.7.1 in Whitt~\cite{Whitt02}),
 we conclude that $d_{M_{1}}(\phi^{(u)}(\eta_{n}), \phi^{(u)}(\eta))$ does not converge to zero. Therefore the maximum functional $\phi^{(u)}$ is not continuous at $\eta$ with respect to the standard $M_{1}$ topology. Since $\eta \in \Lambda$ we conclude that $\phi^{(u)}$ is not continuous on the set $\Lambda$.

\end{rem}

\section{Examples}\label{s:four}

\begin{ex}\label{ex:WM1M1} (A $m$-dependent process).
Let $(Z_n)_{n \in \mathbb{Z}}$ be a sequence of i.i.d.~unit Fr\'{e}chet random variables, i.e. $\PP(Z_{n} \leq x) = e^{-1/x}$ for $x>0$. Hence $Z_{n}$ is regularly varying with index $\alpha=1$. Take a sequence of positive real numbers $(a_{n})$ such that
$n \PP (Z_{1}>a_{n}) \to 1$ as $n \to \infty$.
Now let
$$
X_n = (Z_n, Z_{n-1}, \ldots, Z_{n-m}), \quad {n \in \mathbb{Z}}.
$$
Then every $X_{n}$ is also regularly varying with index $\alpha=1$.
By an application of Proposition 5.1 in Basrak et al.~\cite{BDM02b} it can be seen that the random process $(X_{n})$ is jointly regularly varying. Since the sequence $(X_{n})$ is $m$--dependent, it follows immediately that Conditions~\ref{c:mixcond1} and~\ref{c:mixcond2} hold (cf. Basrak and Krizmani\'{c}~\cite{BaKr}).

Therefore $(X_{n})$ satisfies all the conditions of Theorem~\ref{t:functconvergence}, and the corresponding partial maxima process $M_{n}(\,\cdot\,)$ converge in distribution in $D([0,1], \mathbb{R}_{+}^{m+1})$ to an extremal process $\widetilde{M}(\,\cdot\,)$ under the weak $M_{1}$ topology.

Next we show that $M_{n}(\,\cdot\,)$ does not converge in distribution under the standard $M_{1}$ topology on $D([0,1], \mathbb{R}_{+}^{m+1})$. This shows that the weak $M_{1}$ topology in Theorem~\ref{t:functconvergence} in general can not be replaced by the standard $M_{1}$ topology. In showing this we use, with appropriate modifications, a combination of arguments used by Basrak and Krizmani\'{c}~\cite{BaKr} in their Example 4.1 and Avram and Taqqu~\cite{AvTa92} in their Theorem 1 (cf. also Example 5.1 in Krizmani\'{c}~\cite{Kr14}).

For simplicity take $m=1$.
We have $M_{n}(t) = (M_{n}^{1}(t), M_{n}^{2}(t))$, where
$$ M_{n}^{1}(t) = \bigvee_{j=1}^{\floor{nt}} \frac{Z_{j}}{a_{n}} \quad \textrm{and} \quad
 M_{n}^{2}(t) = \bigvee_{j=1}^{\floor{nt}} \frac{Z_{j-1}}{a_{n}}.$$
Let
 $$ V_{n}(t) := M_{n}^{1}(t) - M_{n}^{2}(t), \quad t \in [0,1].$$

The first step is to show that $V_{n}(\,\cdot\,)$ does not converge in distribution in $D([0,1], \mathbb{R}_{+})$ endowed with the (standard) $M_{1}$ topology. For this, according to Skorohod~\cite{Sk56} (cf. also Proposition 2 in Avram and Taqqu~\cite{AvTa92}), it suffices to show that
 \begin{equation}\label{e:osc1}
 \lim_{\delta \to 0} \limsup_{n \to \infty} \PP ( \omega_{\delta}(V_{n}(\,\cdot\,)) > \epsilon ) > 0
 \end{equation}
 for some $\epsilon >0$, where
 $$ \omega_{\delta}(x) = \sup_{{\footnotesize \begin{array}{c}
                                t_{1} \leq t \leq t_{2} \\
                                0 \leq t_{2}-t_{1} \leq \delta
                              \end{array}}
} M(x(t_{1}), x(t), x(t_{2}))$$
($x \in D([0,1], \mathbb{R}_{+}), \delta >0)$ and
$$ M(x_{1},x_{2},x_{3}) = \left\{ \begin{array}{ll}
                                   0, & \ \ \textrm{if} \ x_{2} \in [x_{1}, x_{3}], \\
                                   \min\{ |x_{2}-x_{1}|, |x_{3}-x_{2}| \}, & \ \ \textrm{otherwise},
                                 \end{array}\right.$$
Note that $M(x_{1},x_{2},x_{3})$ is the distance form $x_{2}$ to $[x_{1}, x_{3}]$, and $\omega_{\delta}(x)$ is the $M_{1}$ oscillation of $x$.

Let $i'=i'(n)$ be the index at which $\max_{1 \leq i \leq n-1}Z_{i}$ is obtained. Fix $\epsilon >0$ and introduce the events
 $$A_{n,\epsilon} = \{ Z_{i'} > \epsilon a_{n} \} = \Big\{ \max_{1 \leq i \leq n-1}Z_{i} > \epsilon a_{n}\Big\}$$
 and
 $$ B_{n,\epsilon} = \{Z_{i'}>\epsilon a_{n} \ \textrm{and} \ \exists\,l
 \neq 0, -i' \leq l \leq 1, \ \textrm{such that} \ Z_{i'+l} > \epsilon a_{n} / 4 \}.$$
Using the facts that $(Z_{i})$ is an i.i.d.~sequence and $n
 \PP(Z_{1}> c a_{n}) \to 1/c$ as $n \to \infty$ for $c>0$
 (which follows from the regular variation property of $Z_{1}$) we get
 \begin{equation}\label{e:limAn}
  \lim_{n \to \infty}\PP(A_{n,\epsilon}) = 1 - e^{-1/\epsilon} \end{equation}
and
 \begin{equation}\label{e:limBn}
 \limsup_{n \to \infty} \PP(B_{n,\epsilon})  \leq  \frac{4}{\epsilon^{2}}
 \end{equation}
(see Example 5.1 in Krizmani\'{c}~\cite{Kr14}).

On the event $A_{n,\epsilon} \setminus B_{n,\epsilon}$
one has $Z_{i'} > \epsilon a_{n}$ and $Z_{i'+l} \leq \epsilon a_{n}/4$ for every $l \neq 0$, $-i' \leq l \leq 1$, so that
$$ \bigvee_{j=1}^{i'}\frac{Z_{j}}{a_{n}} = \frac{Z_{i'}}{a_{n}} > \epsilon$$
and
$$ \max \bigg\{ \bigvee_{j=1}^{i'-1}\frac{Z_{j}}{a_{n}}, \bigvee_{j=1}^{i'}\frac{Z_{j-1}}{a_{n}}, \bigvee_{j=1}^{i'-1}\frac{Z_{j-1}}{a_{n}} \bigg\}   \leq \frac{\epsilon}{4}.$$
Therefore
$$ V_{n} \Big( \frac{i'}{n} \Big) = \bigvee_{j=1}^{i'}\frac{Z_{j}}{a_{n}} - \bigvee_{j=1}^{i'}\frac{Z_{j-1}}{a_{n}} > \epsilon - \frac{\epsilon}{4} = \frac{3 \epsilon}{4},$$
$$ V_{n} \Big( \frac{i'-1}{n} \Big) = \bigvee_{j=1}^{i'-1}\frac{Z_{j}}{a_{n}} - \bigvee_{j=1}^{i'-1}\frac{Z_{j-1}}{a_{n}} \in \Big[ - \frac{\epsilon}{4}, \frac{\epsilon}{4} \Big],$$
$$V_{n} \Big( \frac{i'+1}{n} \Big) = \bigvee_{j=1}^{i'+1}\frac{Z_{j}}{a_{n}} - \bigvee_{j=1}^{i'+1}\frac{Z_{j-1}}{a_{n}} = \frac{Z_{i'}}{a_{n}} - \frac{Z_{i'}}{a_{n}} = 0,$$
and these imply
\begin{equation}\label{e:inc1}
  \Big| V_{n} \Big( \frac{i'}{n} \Big) - V_{n} \Big( \frac{i'-1}{n} \Big) \Big|  > \frac{3 \epsilon}{4} - \frac{\epsilon}{4} = \frac{\epsilon}{2}
\end{equation}
and
\begin{equation}\label{e:inc2}
  \Big| V_{n} \Big( \frac{i'+1}{n} \Big) - V_{n} \Big( \frac{i'}{n} \Big) \Big| > \frac{3 \epsilon}{4}.
\end{equation}

Note that on the set $A_{n,\epsilon} \setminus B_{n,\epsilon}$ it also holds that
$$ V_{n} \Big( \frac{i'}{n} \Big) \notin \Big[ V_{n} \Big( \frac{i'-1}{n} \Big), V_{n} \Big( \frac{i'+1}{n} \Big) \Big],$$
which implies that
\begin{eqnarray*}
  M \Big( V_{n} \Big( \frac{i'-1}{n} \Big), V_{n} \Big( \frac{i'}{n} \Big), V_{n} \Big( \frac{i'+1}{n} \Big) \Big) & &  \\[0.8em]
   & \hspace*{-20em} =& \hspace*{-10em} \min \bigg\{  \Big| V_{n} \Big( \frac{i'}{n} \Big) - V_{n} \Big( \frac{i'-1}{n} \Big) \Big|, \Big| V_{n} \Big( \frac{i'+1}{n} \Big) - V_{n} \Big( \frac{i'}{n} \Big) \Big| \bigg\}.
\end{eqnarray*}
Taking into account (\ref{e:inc1}) and (\ref{e:inc2}) we obtain
\begin{eqnarray*}
  \omega_{2/n}(V_{n}(\,\cdot\,)) & = & \sup_{{\footnotesize \begin{array}{c}
                                t_{1} \leq t \leq t_{2} \\
                                0 \leq t_{2}-t_{1} \leq 2/n
                              \end{array}}
} M(V_{n}(t_{1}), V_{n}(t), V_{n}(t_{2})) \\[0.8em]
   & \geq & M \Big( V_{n} \Big( \frac{i'-1}{n} \Big), V_{n} \Big( \frac{i'}{n} \Big), V_{n} \Big( \frac{i'+1}{n} \Big) \Big) > \frac{\epsilon}{2}
\end{eqnarray*}
on the event $A_{n,\epsilon} \setminus B_{n,\epsilon}$. Therefore, since $\omega_{\delta}(\,\cdot\,)$ is nondecreasing in $\delta$, it holds that
 \begin{eqnarray}\label{e:oscM1}
  \nonumber \liminf_{n \to \infty} \PP(A_{n,\epsilon} \setminus B_{n,\epsilon}) & \leq & \liminf_{n \to \infty}
 \PP (\omega_{2/n} (V_{n}(\,\cdot\,)) >  \epsilon /2)\\[0.4em]
 & \leq &   \lim_{\delta \to 0} \limsup_{n \to \infty}  \PP (\omega_{\delta} (V_{n}(\,\cdot\,)) >  \epsilon/2).
 \end{eqnarray}

Note that $x^{2}(1-e^{-1/x})$ tends to infinity as $x \to \infty$, and therefore we can find $\epsilon >0$ such that  $\epsilon^{2}(1-e^{-1/ \epsilon}) > 4$, i.e.
 $$ 1-e^{-1/ \epsilon} > \frac{4}{ \epsilon^{2}}.$$
 For this $\epsilon$, by relations (\ref{e:limAn}) and (\ref{e:limBn}), it holds that
 $$\lim_{n \to \infty} \PP(A_{n,\epsilon}) > \limsup_{n \to \infty} \PP(B_{n,\epsilon}),$$
 i.e.
 $$  \liminf_{n \to \infty} \PP(A_{n,\epsilon} \setminus B_{n,\epsilon}) \geq \lim_{n \to \infty}\PP(A_{n,\epsilon}) - \limsup_{n \to \infty} \PP(B_{n,\epsilon}) >0.$$
Thus by (\ref{e:oscM1}) we obtain
$$ \lim_{\delta \to 0} \limsup_{n \to \infty}  \PP (\omega_{\delta} (V_{n}(\,\cdot\,)) >  \epsilon/2) > 0$$
and (\ref{e:osc1}) holds, i.e. $V_{n}(\,\cdot\,)$ does not converge in distribution in $D([0,1], \mathbb{R}_{+})$ endowed with the (standard) $M_{1}$ topology.

If $M_{n}(\,\cdot\,)$ would converge in distribution to some $\widetilde{M}(\,\cdot\,)$ in the standard $M_{1}$ topology on $D([0,1], \mathbb{R}_{+}^{2})$, then using the fact that linear combinations of the coordinates are continuous in the same topology (cf. Theorem 12.7.1 and Theorem 12.7.2 in Whitt~\cite{Whitt02}) and the continuous mapping theorem, we would obtain that $V_{n}(\,\cdot\,) = M_{n}^{1}(\,\cdot\,) - M_{n}^{2}(\,\cdot\,)$ converges to $\widetilde{M}^{1}(\,\cdot\,) - \widetilde{M}^{2}(\,\cdot\,)$ in $D([0,1], \mathbb{R}_{+})$ endowed with the standard $M_{1}$ topology, which is impossible, as is shown above.

\end{ex}

\begin{ex} (Stochastic recurrence equation).
Suppose the $d$--dimensional random process $(X_{n})$ satisfies a stochastic recurrence equation
$$X_{n} = A_{n}X_{n-1}+B_{n}, \quad n \in \mathbb{Z},$$
for some i.i.d.~sequence $((A_{n}, B_{n}))$ of random $d \times d$ matrices $A_{n}$ and $d$--dimensional vectors $B_{n}$, all with nonnegative components. Then it can be shown that under relatively general conditions the process $(X_{n})$ satisfies all conditions of Theorem~\ref{t:functconvergence} (see Example 4.2 in Basrak and Krizmani\'{c}~\cite{BaKr}), and hence the corresponding partial maxima process $M_{n}(\,\cdot\,)$ converges in $D([0,1], \mathbb{R}_{+}^{d})$ with the weak $M_{1}$ topology.
\end{ex}

\begin{ex} (Multivariate squared GARCH process).
We consider the multivariate GARCH (p, q) model with constant conditional correlations, which is defined as follows; see Fern\'{a}ndez and Muriel~\cite{FeMu09}. Let $(\eta_{n})_{n \in \mathbb{Z}}$ be a sequence of i.i.d.~random vectors with mean vector $0$ and covariance matrix $R$ such that
$R(i,i)=1$ for all $i= 1, \ldots, d$. The stochastic process $(X_{n})_{n \in \mathbb{Z}}$ is a CCC-GARCH (p,q) process if it satisfies the following equations
\begin{eqnarray*}
  \delta(H_{n})  &=& C + \sum_{i=1}^{p}A_{i} \delta(X_{n-i}X_{n-i}^{T}) + \sum_{j=1}^{q}B_{j} \delta (H_{n-j}), \\[0.3em]
  D_{n} &=& \textrm{diag} (H_{n}(1,1)^{1/2}, H_{n}(2,2)^{1/2}, \ldots, H_{n}(d,d)^{1/2}), \\[0.2em]
  H_{n} &=& D_{n}RD_{n}, \\[0.2em]
  X_{n} &=& D_{n} \eta_{n},
\end{eqnarray*}
where for a square $d \times d$ matrix $M$, $\delta (M)$ denotes the vector whose entries are $\delta(M)(i) = M(i,i)$ for $i=1,\ldots,d$ (i.e. the main diagonal of $M$), and $\textrm{diag}(M)$ denotes the diagonal matrix with the same diagonal as $M$. The vector $C$ is assumed to be positive and the matrices $A_{i}, B_{j}$ are assumed to be nonnegative for
$i = 1, \ldots, p$ and $j = 1, \ldots, q$.

Assume now the matrices $A_{i}, B_{j}$ have no zero rows, $\eta_{1}$ has a strictly positive density on $\mathbb{R}^{d}$ and for any $\gamma \geq 1$ there exists $h>1$ such that $\gamma^{h} \leq \mathrm{E}[(\eta_{1}^{j})^{2h}] \leq \infty$ for all $j=1,\ldots,d$. Put
$$ Y_{n} = (\delta(H_{n+1})^{T}, \ldots, \delta(H_{n-q+2})^{T}, \delta(X_{n}X_{n}^{T})^{T}, \ldots, \delta(X_{n-p+2}X_{n-p+2}^{T})^{T})^{T}.$$
Then by Theorem 5 in~\cite{FeMu09} there exists $\alpha >0$ such that for every $x \in \mathbb{R}^{d(p+q-1)} \setminus \{0\}$, $\sum_{i=1}^{d(p+q-1)}x^{i}Y_{1}^{i}$ is regularly varying with index $\alpha$. If $\alpha$ is not an even integer and $\eta_{1}$ has symmetric marginal distributions, from Corollary 6 in~\cite{FeMu09} we know that the process $(X_{n})$ is jointly regularly varying with index $2\alpha$. Further $(X_{n})$ is $\beta$--mixing (see Remark 4 in~\cite{FeMu09}, cf.~also Boussama~\cite{Bo05}), and since $\beta$--mixing implies strong mixing (cf.~Bradley~\cite{Br05}), Condition~\ref{c:mixcond1} holds. As in the one-dimensional case in Basrak et al.~\cite{BDM02b} it can be proved that $(X_{n})$ satisfies Condition~\ref{c:mixcond2}.

The joint regular variation property and Conditions~\ref{c:mixcond1} and~\ref{c:mixcond2} transfer immediately to the squared CCC-GARCH (p,q) process
$$ X_{n}^{2} = ((X_{n}^{1})^{2}, \ldots, (X_{n}^{d})^{2})$$
(with the remark that this process is jointly regularly varying with index $\alpha$), and from Theorem~\ref{t:functconvergence}, we conclude that the corresponding partial maxima process of $(X_{n}^{2})$ converges in distribution in $D([0,1], \mathbb{R}_{+}^{d})$ to an extremal process under the weak $M_{1}$ topology.

\end{ex}

\section*{Acknowledgements}
This work has been supported in part by Croatian Science Foundation under the project 3526.

\end{document}